\theoremstyle{plain}
\theoremstyle{plain}
\newtheorem{thm}{Theorem}
\newtheorem{prop}{Proposition}
\newtheorem{lem}{Lemma}
\newtheorem{coro}{Corollary}
\theoremstyle{definition}
\newtheorem{exam}{Example}
\begin{document}
\setcounter{page}{1}

\title{ On the lattice structure of the space of all Bochner integrable Banach lattice-valued functions}
\author[Omid Zabeti ]{Omid Zabeti}

\address{ Department of Mathematics, Faculty of Mathematics,Statistics and computer science, University of Sistan and Baluchestan, P.O. Box: 98135-674, Zahedan, Iran.}
\email{{o.zabeti@gmail.com}}

\subjclass[2010]{46B42.}

\keywords{Bochner integral, $KB$-space, Fatou property, Banach lattice.}

\date{Received: xxxxxx; Revised: yyyyyy; Accepted: zzzzzz.
\newline \indent $^{*}$ Corresponding author}

\begin{abstract}
Suppose $(X,\Sigma,\mu)$ is a finite measure space, $E$ is a Banach lattice, and $B(X,E,\mu)$ is the space of all Bochner integrable $E$-valued functions.
In this note, we show that $B(X,E,\mu)$ is a $KB$-space or has the sequential Fatou property if and only if so is $E$. Among this, some results about Bochner integral convergence in $B(X,E,\mu)$, using order structure of $E$, have been proved, as well.
\end{abstract}
\maketitle
\section{motivation and preliminaries}
Let us start with some motivation.
Bochner integral is one of the best ways to  generalize  the usual notion of integral ( Riemann integral or Lebesgue one) to vector-valued functions.
So, it is an enlightening line for research to discover different aspects of the space of all Bochner integrable functions. Certainly, investigative properties depend on the structure of the underlying vector space; for example, we can not expect order structure in the space of all Bochner integrable $E$-valued functions when $E$ is a Banach space, in general.

 Let $(X,\Sigma,\mu)$ be a finite measure space and $E$ be a Banach lattice. The space of all Bochner integrable $E$-valued functions from $X$ into $E$ is denoted by $B(X,E,\mu)$. It is shown in \cite[Theorem 2.5]{Willem}, it is a Banach lattice under norm $\|f\|=\int_{X}\|f(w)\|d\mu$. For more expositions on this topic, see \cite{Ryan, Willem}.

 In this paper, we investigate whether $B(X,E,\mu)$ is a $KB$-space or is sequentially Fatou. More precisely, we formally prove this holds exactly when $E$ possesses the same property.

 Let us again consider some motivation.
Suppose that $E$ is a Banach lattice. It is known that in some classical function spaces such as $\ell_p$ for $1\leq p\leq\infty$ or $c_0$, unbounded order convergence ( $uo$-convergence, in brief) for nets is as the same as pointwise convergence; in addition, in $L^p(\mu)$-spaces, $uo$-convergence for sequences and almost everywhere convergence agree. So, $uo$-convergence is a generalization of coordinate-wise convergence in general Banach lattices; furthermore, for order bounded nets, $uo$-convergence and order one coincide.
So, it is of independent interest to investigate some theorems such as the monotone convergence theorem which relies on the pointwise convergence, by means of order convergence. Beside this, some attempts have been made to generalize such theorems to monotone Banach lattice-valued functions using pointwise convergence ( recall that in some classical spaces, order convergence and coordinate-wise one agree); see \cite[Proposition 2.6]{Willem}.
In this paper, we try to generalize this result using unbounded order and order convergences.
 For more details regarding Banach lattices, Bochner integral, and the related topics, see \cite{AB, Ryan, Willem}.

Suppose $E$ is a Banach lattice. Recall that for a net $(x_\alpha)$ in $E$, if there is a net $(u_\gamma)$, possibly over a
different index set, with $u_\gamma \downarrow 0$ and for every $\gamma$ there exists $\alpha_0$ such
that $|x_{\alpha} - x| \leq u_\gamma$ whenever $\alpha \geq \alpha_0$, we say that $(x_\alpha)$ converges to $x$ in {\bf order}, in notation, $x_\alpha \xrightarrow{o}x$. Moreover, $(x_{\alpha})$ {\bf unbounded order} converges to $x$ ( $x_{\alpha}\xrightarrow{uo} x$) provided that $|x_{\alpha}-x|\wedge w\xrightarrow{o}0$ for each $w\in E_{+}$. For ample information regarding this type of convergence and the related expositions, see \cite{GTX, GX}.
Recall that a subset $A$ of a Banach lattice $E$ is said to be {\bf almost order bounded} if for each $\varepsilon>0$ there is a positive $u\in E$ such that $A\subseteq [-u,u]+\varepsilon B_E$, in which $B_E$ presents the closed unit ball of $E$. In addition,
  a Banach lattice $E$ is {\bf order continuous} if $x_{\alpha}\downarrow 0$ implies that $\|x_{\alpha}\|\downarrow 0$; it is {\bf $\sigma$-order continuous} provided that $x_n\downarrow 0$ results in $\|x_n\|\downarrow 0$. $E$ is called a {\bf $KB$-space} if every increasing norm bounded sequence in $E_{+}$, converges. Also, the norm of a Banach lattice $E$ is said to be {\bf sequentially Fatou} or we say $E$ has the {\bf sequential Fatou property}  whenever for every increasing sequence $(x_n)\subseteq E_{+}$, $x_n\uparrow x$ implies that $\|x_n\|\uparrow \|x\|$. For more details, see \cite{AB}.

\section{main results}
First of all, let us start with an elementary but useful fact.
\begin{lem}\label{60}
Let $(X,\Sigma,\mu)$ be a finite measure space and $E$ be a Banach lattice. Then there exists a topologically lattice isomorphism from $E$ into $B(X,E,\mu)$.
\end{lem}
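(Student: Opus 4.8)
The plan is to realize the embedding via constant functions. Define a map $T\colon E\to B(X,E,\mu)$ by letting $T(x)$ be the function identically equal to $x$ on $X$, that is, $T(x)(w)=x$ for every $w\in X$. First I would check that $T(x)$ genuinely lies in $B(X,E,\mu)$: a constant function is (strongly) measurable, and since $\mu$ is finite,
\[
\int_X\|T(x)(w)\|\,d\mu=\int_X\|x\|\,d\mu=\mu(X)\,\|x\|<\infty,
\]
so $T(x)$ is Bochner integrable. Linearity of $T$ is immediate, since sums and scalar multiples of constant functions are again the corresponding constant functions.

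The key structural point is that, by the description of the Banach lattice $B(X,E,\mu)$ recalled above (\cite[Theorem 2.5]{Willem}), its lattice operations are computed pointwise almost everywhere. Consequently, for $x\in E$ one has $|T(x)|(w)=|T(x)(w)|=|x|$ for every $w\in X$, so $|T(x)|=T(|x|)$. This identity says precisely that $T$ is a lattice homomorphism, hence a lattice isomorphism onto its range $T(E)$.

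It remains to see that $T$ is a topological isomorphism onto its image. From the computation above,
\[
\|T(x)\|=\int_X\|T(x)(w)\|\,d\mu=\mu(X)\,\|x\|
\]
for every $x\in E$. Assuming, as is implicit, that $\mu(X)>0$, this shows that $T$ is bounded and bounded below by the same positive constant $\mu(X)$; in particular $T$ is injective and a homeomorphism onto $T(E)$. Combining this with the lattice-homomorphism property finishes the proof. I would not expect any serious obstacle here: the only points requiring care are the (routine) integrability of constant functions on a finite measure space and the invocation of the pointwise character of the lattice operations in $B(X,E,\mu)$, which is exactly what guarantees that the constant embedding preserves suprema, infima, and absolute values. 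One should also keep in mind the usual identification of functions agreeing $\mu$-almost everywhere, which is harmless since distinct constants differ on all of $X$.
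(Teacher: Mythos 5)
Your proposal is correct and follows essentially the same route as the paper: embedding $E$ via constant functions, computing $\|T(x)\|=\mu(X)\|x\|$ to get a topological isomorphism onto the image, and using the pointwise (a.e.) lattice operations to verify $|T(x)|=T(|x|)$. Your write-up is in fact a bit more careful than the paper's, spelling out linearity and the implicit assumption $\mu(X)>0$, but there is no substantive difference in the argument.
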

\begin{proof}
Define $T$ from $E$ into $B(X,E,\mu)$ via $T(x)=f_x$ in which $f_x:X\to E$ is determined by $f_x(w)=x$. $f_x$ is a simple function so that strongly measurable. Also, $\|f_x\|=\int_{X}\|f_x(w)\|d\mu=\|x\|\mu(X)$. This shows that $T$ is topologically isomorphism. Indeed, $T(|x|)=|T(x)|$. This would complete the proof.
\end{proof}
The preceding lemma is fruitful in the sense that when $B(X,E,\mu)$ has a property which holds on the lattice and topological structures, we can conclude that $E$ has the same property, too. The first application is included in the next lemma.
A variant of the following result has been obtained in \cite[Page 3, Proposition]{DC}.
\begin{lem}\label{9}
Let $(X,\Sigma,\mu)$ be a finite measure space and $E$ be a Banach lattice. Then $E$ is order continuous if and only if so is $B(X,E,\mu)$.
\end{lem}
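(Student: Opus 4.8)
The plan is to reduce both implications to the standard characterization of order continuity in terms of disjoint sequences: a Banach lattice has order continuous norm if and only if every order bounded disjoint sequence in its positive cone is norm null (see \cite{AB}). The advantage of this characterization is that it converts the net-type condition defining order continuity into a purely sequential one, which cooperates well both with Lemma \ref{60} and with the pointwise description of the lattice operations in $B(X,E,\mu)$.

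One implication is immediate from Lemma \ref{60} together with the remark following it. If $B(X,E,\mu)$ is order continuous and $(x_n)$ is an order bounded disjoint sequence in $E_+$, say $x_n\leq u$ and $x_n\wedge x_m=0$ for $n\neq m$, then applying the lattice isomorphism $T$ of Lemma \ref{60} yields a disjoint sequence $(Tx_n)$ in $B(X,E,\mu)_+$ that is order bounded by $Tu$; order continuity of $B(X,E,\mu)$ forces $\|Tx_n\|\to 0$, and since $T$ is a topological isomorphism onto its range we obtain $\|x_n\|\to 0$. Hence $E$ is order continuous.

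For the converse, I would exploit that the finite lattice operations in $B(X,E,\mu)$ are computed pointwise $\mu$-a.e., as follows from the Banach lattice structure in \cite[Theorem 2.5]{Willem}. Let $(F_n)$ be a disjoint sequence in $B(X,E,\mu)_+$ with $F_n\leq H$ for some $H\in B(X,E,\mu)_+$. Then for $n\neq m$ we have $F_n(w)\wedge F_m(w)=0$ and $0\leq F_n(w)\leq H(w)$ for $\mu$-almost every $w$; discarding a countable union of null sets, for a.e.\ fixed $w$ the sequence $(F_n(w))_n$ is an order bounded disjoint sequence in $E_+$. Since $E$ is order continuous, the same characterization gives $\|F_n(w)\|\to 0$ for a.e.\ $w$. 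The pointwise norms are dominated by $\|H(\cdot)\|\in L^1(\mu)$, so the dominated convergence theorem yields $\|F_n\|=\int_X\|F_n(w)\|\,d\mu\to 0$. Thus every order bounded disjoint sequence in $B(X,E,\mu)_+$ is norm null, and $B(X,E,\mu)$ is order continuous.

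The main obstacle is the gap between order continuity, which is a statement about arbitrary decreasing nets, and the pointwise and sequential tools available for Bochner integrable functions: a naive attempt to take the pointwise infimum of an uncountable net fails because such an infimum need not be measurable, and a direct dominated convergence argument on decreasing sequences would only deliver $\sigma$-order continuity. Routing everything through the disjoint sequence characterization is precisely what bridges this gap, since disjointness is preserved both under the pointwise operations and under the embedding $T$, while the problem is reduced to a single countable family to which the dominated convergence theorem applies. The only routine points left to verify are that $T$ is bounded below, so that norm convergence transfers, and that $\|H(\cdot)\|$ is genuinely integrable, both of which are clear.
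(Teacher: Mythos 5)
Your proof is correct, but it takes a genuinely different route from the paper's. The paper disposes of the hard direction ($E$ order continuous $\Rightarrow$ $B(X,E,\mu)$ order continuous) by citing Cartwright's result \cite[Page 3, Proposition]{DC}, together with the characterization of order continuity as weak compactness of order intervals, and for the converse it simply says ``use Lemma \ref{60}.'' You instead run both implications through the sequential characterization of order continuity (every order bounded disjoint sequence in the positive cone is norm null, which is indeed standard and available in \cite{AB}), proving the hard direction directly via the a.e.\ pointwise lattice operations of $B(X,E,\mu)$ and the dominated convergence theorem. This buys two things. First, your argument is self-contained and elementary, with no appeal to Cartwright or to weak compactness. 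Second, your treatment of the converse is actually more careful than the paper's terse instruction: order continuity is a condition on arbitrary nets, and pushing it through the embedding $T$ of Lemma \ref{60} requires knowing that order continuity passes to closed sublattices, which is not formally automatic, since infima computed in a sublattice need not coincide with infima in the ambient space, and a pointwise a.e.\ argument breaks down for uncountable nets (each index contributes its own null set). Routing through disjoint sequences is precisely what makes that transfer rigorous, because disjointness and order bounds are preserved by lattice embeddings and only countably many null sets ever arise. What the paper's approach buys in exchange is brevity and a link to the weak-compactness viewpoint. One minor caveat, shared by both arguments: the embedding of Lemma \ref{60} is bounded below only when $\mu(X)>0$, an implicit nondegeneracy assumption.
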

\begin{proof}
The direct implication has proved in \cite[Page 3, Proposition]{DC}; since order continuity of a Banach lattice is equivalent to the weak compactness of order intervals.  For the other side, use Lemma \ref{60}.
\end{proof}
Now, we establish a $\sigma$-order continuous version of Lemma \ref{9}.
\begin{lem}\label{1001}
Let $(X,\Sigma,\mu)$ be a finite measure space and $E$ be a Banach lattice. Then $E$ is $\sigma$-order continuous if and only if so is $B(X,E,\mu)$.
\end{lem}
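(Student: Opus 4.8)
The plan is to split the equivalence into its two implications, the reverse one being an immediate consequence of Lemma \ref{60} and the forward one carrying the real content. For the reverse implication, suppose $B(X,E,\mu)$ is $\sigma$-order continuous and let $x_n\downarrow 0$ in $E$. With $T$ the embedding of Lemma \ref{60}, put $f_n=T(x_n)$; as $T$ is a lattice homomorphism the sequence $(f_n)$ is decreasing, and I would first verify that $f_n\downarrow 0$ in $B(X,E,\mu)$. Indeed, since the lattice operations of $B(X,E,\mu)$ are pointwise $\mu$-a.e.\ (see \cite{Willem}), any $g$ with $0\le g\le f_n$ for all $n$ satisfies $g(w)\le f_n(w)=x_n$ for a.e.\ $w$ and every $n$, hence $g(w)\le\inf_n x_n=0$ a.e.; thus $g=0$ and $\inf_n f_n=0$. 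Then $\sigma$-order continuity of $B(X,E,\mu)$ gives $\|f_n\|\to 0$, and since $\|f_n\|=\mu(X)\|x_n\|$ by Lemma \ref{60}, I conclude $\|x_n\|\to 0$ (the case $\mu(X)=0$ being trivial). Hence $E$ is $\sigma$-order continuous.

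For the forward implication, assume $E$ is $\sigma$-order continuous and take $f_n\downarrow 0$ in $B(X,E,\mu)$; the goal is $\|f_n\|=\int_X\|f_n(w)\|\,d\mu\to 0$. Off a fixed null set the sequence $(f_n(w))$ is decreasing in $E_+$, so $\|f_n(w)\|$ decreases to some $\ell(w)\ge 0$, and by the dominated convergence theorem (with dominating function $\|f_1(w)\|\in L^1(\mu)$) it suffices to prove $\ell=0$ a.e., that is, $\|f_n(w)\|\to 0$ a.e. Suppose not, so that $A=\{w:\ell(w)>0\}$ is non-null. For $w\in A$ one has $\|f_n(w)\|\ge\ell(w)>0$ for all $n$; since $E$ is $\sigma$-order continuous, the relation $f_n(w)\downarrow 0$ would force $\|f_n(w)\|\to 0$, which is impossible, so $0$ is not the order infimum of $(f_n(w))$. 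As $f_n(w)\ge 0$, this yields some $z_w\in E_+\setminus\{0\}$ with $z_w\le f_n(w)$ for every $n$. Assembling these pointwise bounds into a function $g$ supported on $A$ would give $0\le g\le f_n$ for all $n$ with $g\ne 0$, contradicting $\inf_n f_n=0$ in $B(X,E,\mu)$ and finishing the proof.

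The step I expect to be the main obstacle is precisely this assembly of the $z_w$ into an admissible $g\in B(X,E,\mu)$: because $E$ need not be Dedekind $\sigma$-complete, the pointwise order infimum of $(f_n(w))$ may fail to exist, so one cannot simply set $g(w)=\inf_n f_n(w)$, and instead must produce a measurable, integrable selection $w\mapsto z_w$ of positive common lower bounds on $A$. This is where I would invoke a measurable selection theorem, using that strongly measurable functions are a.e.\ separably valued. Alternatively, one can bypass the selection entirely by routing the forward implication through the standard characterization of $\sigma$-order continuity by order bounded disjoint sequences (see \cite{AB}): an order bounded disjoint sequence in $B(X,E,\mu)_+$ restricts, off a single null set, to an order bounded disjoint sequence in $E_+$ at $\mu$-a.e.\ $w$, so that disjointness and order boundedness transfer pointwise without any appeal to suprema or infima, after which $\sigma$-order continuity of $E$ together with dominated convergence closes the argument.
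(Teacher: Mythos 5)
Your reverse implication is correct and is exactly the paper's argument via Lemma \ref{60} (you even verify, more carefully than the paper does, that $T(x_n)\downarrow 0$ in $B(X,E,\mu)$ when $x_n\downarrow 0$ in $E$). For the forward implication you have put your finger on precisely the crucial step: everything reduces to showing that $f_n\downarrow 0$ in $B(X,E,\mu)$ forces $f_n(w)\downarrow 0$ in $E$ for a.e.\ $w$. (The paper asserts this in one line with no justification, so the paper's proof and yours stand or fall on the same point.) But your proposal does not close this step. Invoking ``a measurable selection theorem'' is not enough, and the a.e.\ separable-valuedness of strongly measurable functions works \emph{against} you rather than for you: any admissible $g$ must be essentially separably valued, whereas the pointwise lower bounds $z_w$ may only exist outside every fixed separable part of $E$, because infima are not inherited by closed sublattices. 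Concretely, let $K$ be the one-point compactification of an uncountable discrete set $\Gamma$, let $E=C(K)$, fix distinct points $s_1,s_2,\dots$, and let $\xi_n\in E$ be $0$ at $s_1,\dots,s_{n-1}$ and $1$ elsewhere. Inside the separable closed sublattice of functions that are constant off $\{s_k\}$ one has $\inf_n\xi_n=0$, yet in $E$ every $\mathbf{1}_{\{t\}}$ with $t\notin\{s_k\}$ is a nonzero positive lower bound. So a selection argument must manufacture lower bounds lying outside the sublattice where the data $f_n(w)$ live, and your sketch gives no mechanism for doing that measurably; as written, the forward direction is not proved.

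The proposed bypass is worse, because it rests on a false characterization: norm-nullity of order bounded disjoint sequences characterizes \emph{order} continuity, not $\sigma$-order continuity (see \cite[Theorem 4.14]{AB}); the two notions differ exactly when $E$ fails to be Dedekind $\sigma$-complete. The space $E=C(K)$ above witnesses this. It is $\sigma$-order continuous: if $x_n\downarrow 0$, the pointwise limit must vanish on all of $K$ (otherwise a suitable multiple of some $\mathbf{1}_{\{t\}}$ is a nonzero positive lower bound), and then Dini's theorem gives $\|x_n\|\to 0$; yet the disjoint sequence $(\mathbf{1}_{\{t_n\}})$ is order bounded by $\mathbf{1}$ and has norm one for every $n$. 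So the step ``$E$ $\sigma$-order continuous plus pointwise disjointness implies $\|f_n(w)\|\to 0$ a.e.'' is unavailable. The route also fails at the other end: proving that order bounded disjoint sequences in $B(X,E,\mu)$ are norm null would prove that $B(X,E,\mu)$ is order continuous, which is actually \emph{false} whenever $E$ is $\sigma$-order continuous but not order continuous (by Lemma \ref{60}, $B(X,E,\mu)$ contains $E$ as a closed sublattice, and closed sublattices inherit order continuity). In short: you have correctly isolated the gap that the paper itself glosses over, but neither of your two ways of filling it succeeds.
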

\begin{proof}
Suppose $E$ is $\sigma$-order continuous and $(f_n)$ is a sequence in $B(X,E,\mu)$ with $f_n\downarrow 0$. This means that for a.e. $w\in X$, we have $f_n(w)\downarrow 0$. Therefore, by assumption, $\|f_n(w)\|\rightarrow 0$. Now, by the monotone convergence theorem, we have $\|f_n\|=\int_{X}\|f_n(w)\|d\mu\rightarrow 0$. For the other side, one can apply Lemma \ref{60}.
\end{proof}
Let us recall \cite[Proposition 2.6]{Willem}.
\begin{prop}\label{20}
Assume that $(X,\Sigma, \mu)$ is a  measure space and $E$ is a $\sigma$-order continuous Banach lattice. Suppose $(f_n)$ is an increasing norm bounded sequence of Bochner integrable functions such that $f_n\rightarrow f$ for a.e. $w\in X$. Then $f$ is also Bochner integrable and  $\int_{X}f_n d\mu\rightarrow \int_{X}f d\mu$.
\end{prop}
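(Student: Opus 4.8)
The plan is to read the pointwise hypothesis $f_n\to f$ as order convergence, so that $f_n(w)\uparrow f(w)$ for a.e.\ $w$, and to reduce the whole statement to a scalar convergence problem for the real-valued functions $w\mapsto\|f_n(w)-f(w)\|$. A convenient first move is to replace $f_n$ by $f_n-f_1$ and $f$ by $f-f_1$: this keeps the sequence increasing and norm bounded (the norms at most double) and makes every $f_n$ and $f$ positive, which is pleasant because the lattice norm is monotone on the positive cone. So I may assume $0\le f_n(w)\uparrow f(w)$ for a.e.\ $w$.

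The decisive step is the passage from order to norm. Pointwise we have $f(w)-f_n(w)\downarrow 0$ in order for a.e.\ $w$, and here the $\sigma$-order continuity of $E$ enters: it forces $\|f(w)-f_n(w)\|\to 0$, that is, genuine norm convergence $f_n(w)\to f(w)$ a.e. This is exactly where the hypothesis on $E$ is indispensable; without it the statement is false (in $\ell^\infty$ the truncations of $(1,1,1,\dots)$ increase in order to their supremum while remaining a fixed distance away in norm, and the corresponding constant functions have non-convergent integrals). Once norm convergence a.e.\ is in hand, $f$ is strongly measurable, being the a.e.\ norm limit of the strongly measurable $f_n$.

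For Bochner integrability I would exploit that $0\le f_n(w)\le f(w)$ gives $\|f_n(w)\|\uparrow\|f(w)\|$ a.e.\ (monotonicity of the lattice norm together with the norm convergence just established), so the scalar monotone convergence theorem yields $\int_X\|f_n(w)\|\,d\mu\uparrow\int_X\|f(w)\|\,d\mu$. Since the left-hand side stays bounded by $\sup_n\|f_n\|<\infty$, the limit $\int_X\|f(w)\|\,d\mu$ is finite, and strong measurability then makes $f$ Bochner integrable.

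Finally, for the convergence of the integrals I would estimate $\bigl\|\int_X f_n\,d\mu-\int_X f\,d\mu\bigr\|\le\int_X\|f_n(w)-f(w)\|\,d\mu$ and dispatch the right-hand side by dominated convergence: the integrands tend to $0$ a.e.\ and are dominated by the integrable function $w\mapsto\|f(w)\|$, since $0\le f(w)-f_n(w)\le f(w)$. The main obstacle is precisely this interchange of limit and integral on the right: it needs both an integrable majorant, supplied by the positivity reduction, and the upgrade of order convergence to norm convergence, supplied by $\sigma$-order continuity. Everything else is routine once these two ingredients are arranged.
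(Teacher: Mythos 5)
Your proof is correct, but there is nothing in the paper to compare it against line by line: the paper does not prove Proposition \ref{20} at all; it simply recalls it from \cite[Proposition 2.6]{Willem}. So your argument stands as a self-contained replacement, and as such it works. The two ingredients you isolate are exactly the right ones: the positivity reduction $f_n \mapsto f_n - f_1$ (which makes the lattice norm monotone along the sequence and, crucially, manufactures the integrable majorant $w \mapsto \|f(w)\|$ for the final step), and the pointwise upgrade from order to norm convergence via $\sigma$-order continuity of $E$ (your $\ell^\infty$ counterexample is in substance the paper's Example \ref{5}). After these, the scalar monotone convergence theorem gives Bochner integrability of $f$ from the norm bound, and scalar dominated convergence gives $\int_X \|f_n(w)-f(w)\|\,d\mu \to 0$, which is even stronger than the stated conclusion: you obtain $f_n \to f$ in the norm of $B(X,E,\mu)$, not merely convergence of the integrals. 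It is worth contrasting this with how the paper proves its own generalization, Theorem \ref{1000}: there the author routes through the uo-convergence machinery of Gao and Xanthos, Fatou's lemma, and $\sigma$-order continuity of $B(X,E,\mu)$ (Lemma \ref{1001}), at the price of an almost-order-boundedness hypothesis; your elementary MCT/DCT argument avoids that machinery entirely in the monotone case, and it also never uses finiteness of the measure, which matches the proposition's stated generality (it is the one statement in the paper that does not assume $\mu(X)<\infty$).
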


In Proposition \ref{20}, $\sigma$-order continuity is essential and can not be removed. Consider the following.
\begin{exam}\label{5}
Suppose $X=[0,1]$ with the Lebesgue measure and $E=\ell_{\infty}$. For each $n\in \Bbb N$, consider $f_n:X\to E$ via $f_n(t)=(t,\ldots,t,0,\ldots)$, in which $t$ is appeared $n$ times. Clearly, $f_n(t)\uparrow f(t)$, in which $f(t)$ is the constant sequence $t$. Suppose $\alpha_n=\int_{X}(f_n-f)(t)d\mu$. Assume that $\pi_i$ is the linear operator on $E$ defined via $\pi_i((x_n))=x_i$. Certainly, for each $n$, there is an $i$ with $i>n$. Since $\pi_i$ is continuous, by using Bochner integral properties, we have
\[\pi_i(\alpha_n)=\int_{X}\pi_i(0,\ldots,0,t,\ldots)d\mu=\int_{0}^{1}t dt=\frac{1}{2}.\]
So that $\alpha_n\nrightarrow 0$.
\end{exam}
Now, we extend Proposition \ref{20}.
\begin{thm}\label{1000}
Suppose $(X,\Sigma,\mu)$ is a finite measure space and $E$ is a $\sigma$-order continuous Banach lattice. In addition, assume that $(f_n)$ is an almost order bounded sequence of Bochner integrable functions from $X$ into $E$ such that $f_n\xrightarrow{uo}f$. Then, $f$ is Bochner integrable and $\int_{X}f_n d\mu\rightarrow \int_{X}f d\mu$.
\end{thm}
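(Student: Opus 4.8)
The plan is to turn both conclusions into a single statement about the norm of $B := B(X,E,\mu)$, namely that $\|f_n - f\| = \int_X \|f_n(w) - f(w)\|\,d\mu \to 0$, and then to read everything off from the contractivity of the Bochner integral. The structural input that makes the order hypotheses bite is Lemma \ref{1001}: since $E$ is $\sigma$-order continuous, so is $B$. Thus I would first transfer $\sigma$-order continuity to $B$, and then work entirely inside the Banach lattice $B$, where $(f_n)$ is almost order bounded and $f_n \xrightarrow{uo} f$.

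The heart of the matter is the following fact, which I would establish for a $\sigma$-order continuous Banach lattice: an almost order bounded sequence that is $uo$-convergent is norm convergent to its $uo$-limit. Fixing $\varepsilon > 0$, almost order boundedness supplies a positive $u \in B$ with $\|(|f_n| - u)^+\| \le \varepsilon$ for all $n$, and I would use the elementary identity $|g| = |g|\wedge u + (|g|-u)^+$. Applied to $g = f_n - f$ (after replacing $u$ by a suitable multiple) this splits $\|f_n - f\|$ into $\||f_n - f|\wedge u\|$ and a tail term. Since $f_n \xrightarrow{uo} f$ gives $|f_n - f|\wedge u \xrightarrow{o} 0$, the $\sigma$-order continuity of $B$ forces the first term to vanish; this is where Lemma \ref{1001} is used, via the passage from order to norm convergence for sequences (controlling $|f_n-f|\wedge u$ by $\sup_{m\ge n}(|f_m - f|\wedge u)\downarrow 0$ and invoking the $\sigma$-Dedekind completeness of $B$).

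The step I expect to be the main obstacle is controlling the tail term, because almost order boundedness is only assumed for the sequence $(f_n)$ and not for the limit $f$. Here I would first prove a Fatou-type estimate: if $0 \le y_n \xrightarrow{uo} y$ with $\sup_n \|y_n\| \le \varepsilon$, then $\|y\| \le \varepsilon$. Indeed $y \wedge y_n \xrightarrow{o} y$ (since $(y-y_n)^+ \le |y_n-y|\wedge y \xrightarrow{o} 0$), so $\sigma$-order continuity yields $\|y\| = \lim_n \|y\wedge y_n\| \le \sup_n \|y_n\| \le \varepsilon$. Taking $y_n = (|f_n| - u)^+$ and using that the lattice operations are $uo$-continuous gives $\|(|f|-u)^+\| \le \varepsilon$, whence $(|f_n - f| - 2u)^+ \le (|f_n| - u)^+ + (|f|-u)^+$ has norm at most $2\varepsilon$. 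Combining the two estimates gives $\limsup_n \|f_n - f\| \le 2\varepsilon$, and letting $\varepsilon \to 0$ proves $\|f_n - f\| \to 0$. Finally, norm convergence makes $(f_n)$ Cauchy in the complete lattice $B$, so its limit $f$ lies in $B$ and is Bochner integrable, and the integral convergence is immediate from
\[
\Bigl\| \int_X f_n\,d\mu - \int_X f\,d\mu \Bigr\|
\le \int_X \|f_n(w) - f(w)\|\,d\mu
= \|f_n - f\| \longrightarrow 0 .
\]
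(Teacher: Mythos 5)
Your route genuinely differs from the paper's: the paper first proves integrability of $f$ by a pointwise argument (a.e.\ $uo$-convergence, \cite[Proposition 3.7]{GX} applied in $E$, strong measurability via \cite[Theorem 1.26]{Willem}, Fatou's lemma) and then cites \cite[Proposition 3.7]{GX} a second time inside $B(X,E,\mu)$, whereas you try to reprove that cited proposition from scratch inside $B$. Unfortunately the reproof has a genuine gap, at exactly the step you flag: the appeal to ``the $\sigma$-Dedekind completeness of $B$''. That completeness is not available. By \cite[Theorem 4.9]{AB}, a Banach lattice is order continuous if and only if it is $\sigma$-Dedekind complete \emph{and} $\sigma$-order continuous; so if $B$ were $\sigma$-Dedekind complete, then by Lemma \ref{1001} it would be order continuous, and then so would $E$ by Lemma \ref{60} (or Lemma \ref{9}). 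Hence in precisely the cases where the theorem goes beyond order continuity, the suprema $\sup_{m\ge n}(|f_m-f|\wedge 2u)$ you need may fail to exist. Worse, the implication this step was meant to deliver --- in a $\sigma$-order continuous lattice, an order bounded sequence that order converges to $0$ (in the paper's net-witnessed sense) is norm null --- is simply false. Take $E=C(K)$ with $K$ the one-point compactification of an uncountable discrete set $\Gamma$: this $E$ is $\sigma$-order continuous but not order continuous, and for distinct $\gamma_n\in\Gamma$ the sequence $y_n=1_{\{\gamma_n\}}$ satisfies $0\le y_n\le 1_K$ and $y_n\xrightarrow{o}0$, witnessed by the net $u_F=1_K-1_F$ ($F\subseteq\Gamma$ finite), which decreases to $0$ and dominates $y_n$ whenever $\gamma_n\notin F$; yet $\|y_n\|=1$ for all $n$. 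So $(y_n)$ is order bounded (hence almost order bounded) and $uo$-null but not norm null: your key lemma fails under mere $\sigma$-order continuity, and your Fatou-type estimate, which rests on the same order-to-norm passage, fails with it. (The difficulty is intrinsic: the paper discharges this step by quoting \cite[Proposition 3.7]{GX}, which is proved for \emph{order continuous} lattices, so its own proof is exposed to the same objection --- but your version, which attempts the proof, inherits the failure outright.)

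There is a second, structural problem: you presuppose the part of the conclusion the paper works hardest to obtain, namely that $f$ lies in $B(X,E,\mu)$ at all. Every computation you perform --- $|f_n-f|\wedge 2u$, $(|f|-u)^+$, $\|f_n-f\|$ --- takes place in the lattice $B$ and is meaningful only if $f$ is already known to be (a.e.\ equal to) a Bochner integrable function. The closing remark that norm-Cauchyness of $(f_n)$ produces the limit in the complete space $B$ cannot repair this, since the Cauchy estimate itself was obtained by computing with $f$; the argument is circular. The paper avoids the circle by reading the hypothesis pointwise and establishing strong measurability of $f$ (from a.e.\ norm convergence and \cite[Theorem 1.26]{Willem}) and integrability (from Fatou's lemma) \emph{before} any norm estimate in $B$ is made. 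If instead you read $f_n\xrightarrow{uo}f$ as $uo$-convergence in the lattice $B$, then $f\in B$ is part of the hypothesis and the integrability claim is vacuous --- but then your argument says nothing about the situation the theorem is actually aimed at.
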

\begin{proof}
First note that by assumption, for a.e. $w\in X$, $f_n(w)\xrightarrow{uo}f(w)$. By \cite[Proposition 3.7]{GX}, we observe that $f_n(w)\rightarrow f(w)$ in norm. Thus, by \cite[Theorem 1.26]{Willem}, we conclude $f$ is also strongly measurable. Since the sequence $(f_n)$ is bounded, for each fixed $w\in X$, by \cite[Lemma 3.6]{GX}, we have $\|f(w)\|\leq \liminf \|f_n(w)\|<\infty$. Using the Fatou's lemma, yields the following.
\[\int_{X}\|f(w)\|d\mu\leq \int_{X}\liminf\|f_n(w)\|d\mu\leq \liminf\int_{X}\|f_n(w)\|d\mu<\infty.\]
So, we conclude Bochner integrability of $f$. Consider this point that by Lemma \ref{1001}, $B(X,E,\mu)$ is also $\sigma$-order continuous so that another application of \cite[Proposition 3.7]{GX} yields the following.
\[\|f_n-f\|=\int_{X}\|(f_n-f)(w)\|d\mu\rightarrow 0.\]
\end{proof}
Furthermore, when we utilize order convergence instead of $uo$-convergence, we can omit almost order boundedness assumption.
\begin{coro}\label{30}
Suppose $(X,\Sigma,\mu)$ is a finite measure space and $E$ is a $\sigma$-order continuous Banach lattice. In addition, assume that $(f_n)$ is a bounded sequence of Bochner integrable functions from $X$ into $E$ such that  $f_n\xrightarrow{o}f$. Then, $f$ is Bochner integrable and $\int_{X}f_n d\mu\rightarrow \int_{X}f d\mu$.
\end{coro}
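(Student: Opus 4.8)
The plan is to obtain this as an immediate consequence of Theorem \ref{1000}, by checking that the hypotheses of that theorem are automatically fulfilled under the present assumptions. Concretely, I would verify two things: that $o$-convergence is stronger than $uo$-convergence, and that an $o$-convergent sequence is automatically almost order bounded. Once both are established, the conclusion of Theorem \ref{1000}---Bochner integrability of $f$ together with $\int_{X}f_n\,d\mu\to\int_{X}f\,d\mu$---transfers verbatim.

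First I would record that $o$-convergence implies $uo$-convergence. If $f_n\xrightarrow{o}f$, there is a net $(u_\gamma)$ with $u_\gamma\downarrow 0$ such that for each $\gamma$ one has $|f_n-f|\le u_\gamma$ for all sufficiently large $n$. Then for every $w\in E_{+}$ we get $|f_n-f|\wedge w\le u_\gamma$ eventually, and since $u_\gamma\downarrow 0$ this gives $|f_n-f|\wedge w\xrightarrow{o}0$; that is, $f_n\xrightarrow{uo}f$.

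The key step is to show that $(f_n)$ is almost order bounded. Fixing a single index $\gamma_0$ of the dominating net, the definition of $o$-convergence furnishes an $n_0$ with $|f_n-f|\le u_{\gamma_0}$ for all $n\ge n_0$; equivalently, the tail $\{f_n:n\ge n_0\}$ lies in the order interval $[f-u_{\gamma_0},\,f+u_{\gamma_0}]$ and is therefore order bounded. Adjoining the finitely many initial terms $f_1,\dots,f_{n_0-1}$, which constitute an order bounded finite set, shows that the whole sequence $(f_n)$ is order bounded, and every order bounded set is almost order bounded (an order interval $[-u,u]$ already contains the set modulo an arbitrarily small ball).

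Combining the two observations, $(f_n)$ is almost order bounded and satisfies $f_n\xrightarrow{uo}f$, so Theorem \ref{1000} applies and delivers the stated conclusion. I expect the only point requiring care to be the transition from eventual order domination to almost order boundedness of the entire sequence; note in passing that this argument shows the norm-boundedness hypothesis to be superfluous, as $o$-convergence already supplies order boundedness of the sequence on its own.
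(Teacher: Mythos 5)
Your proposal is correct and takes essentially the same route as the paper: the paper offers no separate argument for this corollary, presenting it as an immediate consequence of Theorem \ref{1000} on exactly the grounds you verify, namely that order convergence implies $uo$-convergence and automatically yields (almost) order boundedness of the sequence. Your closing observation that the norm-boundedness hypothesis is redundant is also sound, since an order bounded set in a Banach lattice is norm bounded.
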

Furthermore, Almost order boundedness in Theorem \ref{1000} is also necessary and can not be omitted as the following example shows.
\begin{exam}
Suppose $X=[0,1]$ with the Lebesgue measure and $E=L^1[0,1]$. Assume that $f_n(t)=\delta_n$ in which $\delta_n$ is defined as follows: on the interval $[0,\frac{1}{n}]$, $n$ and zero otherwise. Clearly, $\|\delta_n\|=1$. Observe that $\delta_n\wedge {\sf 1}\downarrow 0$ and the constant function ${\bf 1}$ is a weak unit for $E$ so that by \cite[Corollary 3.5]{GTX} $\delta_n\xrightarrow{uo}0$. But, $\int_{X}f_nd\mu=\int_{X}\delta_nd\mu=1$. Note that $(\delta_n)$ is not almost order bounded since it is not uniformly integrable ( see \cite[Theorem 5.2.9]{AK} for more details).
\end{exam}
\begin{thm}
Let $(X,\Sigma,\mu)$ be a finite measure space and $E$ be a Banach lattice. Then $E$ is a $KB$-space if and only if so is  $B(X,E,\mu)$.

\end{thm}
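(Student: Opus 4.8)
The plan is to handle the two implications separately, with the backward direction being routine and the forward direction carrying the real content. For the backward implication, suppose $B(X,E,\mu)$ is a $KB$-space and let $(x_n)$ be an increasing, norm bounded sequence in $E_+$. Applying the embedding $T$ of Lemma \ref{60}, the sequence $(T(x_n))$ is increasing (because $T$ is a lattice homomorphism) and norm bounded (because $\|T(x_n)\|=\|x_n\|\mu(X)$), hence convergent in $B(X,E,\mu)$; since $T$ is a topological isomorphism onto its image, $(x_n)=(T^{-1}T(x_n))$ is Cauchy and therefore norm convergent in $E$. Thus $E$ is a $KB$-space.

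For the forward direction, assume $E$ is a $KB$-space and fix an increasing, norm bounded sequence $(f_n)$ in $B(X,E,\mu)_+$, say $\sup_n\int_X\|f_n(w)\|\,d\mu=M<\infty$. First I would pass to the pointwise picture: after discarding a null set, $(f_n(w))$ is an increasing sequence in $E_+$ for a.e.\ $w$, and consequently $\|f_n(w)\|$ is increasing in $n$ by monotonicity of the lattice norm. The key step is to control these pointwise sequences: setting $g(w)=\sup_n\|f_n(w)\|$ and applying the monotone convergence theorem gives $\int_X g\,d\mu=\lim_n\int_X\|f_n(w)\|\,d\mu\le M<\infty$, so $g\in L^1(\mu)$ and in particular $g(w)<\infty$ for a.e.\ $w$. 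Hence for a.e.\ $w$ the sequence $(f_n(w))$ is increasing and norm bounded in $E_+$, and the $KB$ property of $E$ produces a norm limit $f(w):=\lim_n f_n(w)\in E_+$; I would set $f(w)=0$ on the exceptional null set.

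It then remains to verify that $f$ is Bochner integrable and that $\|f_n-f\|\to 0$. Strong measurability of $f$ follows because $f$ is the a.e.\ norm limit of the strongly measurable functions $f_n$, by \cite[Theorem 1.26]{Willem}. Since $\|f(w)\|=\lim_n\|f_n(w)\|=g(w)$, we get $\int_X\|f(w)\|\,d\mu=\int_X g\,d\mu<\infty$, so $f$ is Bochner integrable. Finally, because $f_n(w)\le f(w)$ for every $n$ (the positive cone of $E$ is closed), we have $\|(f-f_n)(w)\|\le\|f(w)\|=g(w)\in L^1(\mu)$ while $\|(f-f_n)(w)\|\to 0$ a.e.; the dominated convergence theorem then yields $\|f_n-f\|=\int_X\|(f-f_n)(w)\|\,d\mu\to 0$, so $(f_n)$ converges in $B(X,E,\mu)$ and the space is a $KB$-space.

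The main obstacle is the a.e.\ pointwise norm boundedness of the sequences $(f_n(w))$: norm boundedness of $(f_n)$ in $B(X,E,\mu)$ only bounds the integrals $\int_X\|f_n(w)\|\,d\mu$, not the pointwise norms, and a priori these could blow up on a small set. Exploiting monotonicity to replace $\sup$ by $\lim$ and invoking the monotone convergence theorem is exactly what resolves this, simultaneously furnishing the $L^1$ dominating function $g$ needed at the end; beyond this point the argument is a standard measurability-plus-dominated-convergence routine, and I note that no separate appeal to order continuity is required since it is already built into the $KB$ hypothesis.
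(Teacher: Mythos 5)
Your proof is correct, and its skeleton matches the paper's: the backward implication via the embedding of Lemma \ref{60}, and the forward one by reducing to the pointwise sequences $(f_n(w))$ and applying the $KB$ property of $E$ at a.e.\ $w$. But your execution of the two delicate steps in the forward direction is genuinely different. To obtain a.e.\ pointwise norm boundedness, the paper argues that the increasing sequence $(\|f_n(\cdot)\|)$ is norm convergent in $L^1(\mu)$ (it is Cauchy because the integrals converge), hence order convergent, hence a.e.\ convergent; you instead apply the monotone convergence theorem directly to $g(w)=\sup_n\|f_n(w)\|$, concluding $g\in L^1(\mu)$ and so $g(w)<\infty$ a.e. This is more direct, and it simultaneously hands you the integrable majorant needed at the end. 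For the conclusion, the paper invokes Proposition \ref{20} (i.e.\ \cite[Proposition 2.6]{Willem}, whose hypothesis of $\sigma$-order continuity is available only because every $KB$-space has order continuous norm --- a fact used silently) to get Bochner integrability of $f$, and then appeals to the monotone convergence theorem for $\|f_n-f\|\to 0$; you prove integrability directly from $\|f(w)\|=\lim_n\|f_n(w)\|=g(w)$ and finish with dominated convergence, dominating $\|(f-f_n)(w)\|$ by $g$ via closedness of the positive cone. The net effect is that your argument is self-contained: it needs no order-continuity input beyond the $KB$ hypothesis itself and no external proposition, whereas the paper's route detours through the lattice structure of $L^1(\mu)$ and a cited result whose applicability rests on an unstated implication.
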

\begin{proof}
Suppose $E$ is a $KB$-space and $(f_n)$ is an increasing norm bounded positive sequence in $B(X,E,\mu)$. This means that $\int_{X}\|f_n(w)\|d\mu\leq M$; for some $M\in {\Bbb R}_{+}$. Note that the sequence $(\int_{X}\|f_n(w)\|d \mu)$ is also increasing and bounded so that converges to some $\alpha\in \Bbb R$. This results in $\int_{X}(\|f_n(w)\|-\frac{\alpha}{\mu(X)})d\mu\rightarrow 0$. Note that the sequence $(\|f_n\|)\subseteq L^1(\mu)$ is increasing  and norm convergent so that order convergent to the same limit. This, in turn, implies that it is a.e. convergent. Thus, $(f_n)$ is a.e pointwise convergent so that pointwise bounded. Therefore, $(f_n(w))$ is a bounded increasing sequence in $E$. By assumption, it converges. Assume $f_n(w)\rightarrow \alpha_w$. Define $f:X\to E$ via $f(w)=\alpha_w$. Since, $f$ is the pointwise limit of $(f_n)$, it is strongly measurable. In addition, since $f_n(w)\uparrow f(w)$ for a.e $w\in X$, by Proposition \ref{20}, $f$ is also Bochner integrable. Now, by the usual monotone convergence theorem,
$\|f_n-f\|=\int_{X}\|(f_n-f)(w)\|d\mu\rightarrow 0$, as claimed.

For the converse, again, consider Lemma \ref{60}.
\end{proof}

\begin{thm}\label{50}
Let $(X,\Sigma,\mu)$ be a finite measure space and $E$ be a Banach lattice. Then the norm of $E$ is sequentially Fatou if and only if so is  $B(X,E,\mu)$.
\end{thm}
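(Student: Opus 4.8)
The plan is to prove the two implications separately, with the converse essentially free from Lemma~\ref{60} and the direct implication carrying all the content. For the converse I would assume $B(X,E,\mu)$ has the sequential Fatou property and take $0\le x_n\uparrow x$ in $E$. Feeding this through the embedding $T$ of Lemma~\ref{60}, write $f_x=T(x)$ and $f_{x_n}=T(x_n)$ for the associated constant functions. The first thing to check is that $f_{x_n}\uparrow f_x$ holds in $B(X,E,\mu)$ and not merely in the sublattice $T(E)$: if $g\in B(X,E,\mu)$ satisfies $g\ge f_{x_n}$ for all $n$, then $g(w)\ge x_n$ a.e.\ for every $n$, hence $g(w)\ge\sup_n x_n=x$ a.e., i.e.\ $g\ge f_x$. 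Thus $f_x=\sup_n f_{x_n}$ in $B(X,E,\mu)$, the sequential Fatou property there gives $\|f_{x_n}\|\uparrow\|f_x\|$, and since $\|f_y\|=\|y\|\mu(X)$ with $\mu(X)>0$, dividing by $\mu(X)$ yields $\|x_n\|\uparrow\|x\|$.

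For the direct implication I would assume $E$ has the sequential Fatou property and take $0\le f_n\uparrow f$ in $B(X,E,\mu)$. As the lattice norm is monotone, $(\|f_n\|)$ increases with $\|f_n\|\le\|f\|$, so it converges to some $L\le\|f\|$ and the only task is to prove $L=\|f\|$. My reduction would be pointwise: since $0\le f_n(w)\le f_{n+1}(w)$ a.e.\ the scalar functions $w\mapsto\|f_n(w)\|$ increase, so the monotone convergence theorem gives
\[
\|f_n\|=\int_X\|f_n(w)\|\,d\mu \ \uparrow\ \int_X\sup_n\|f_n(w)\|\,d\mu .
\]
Because $\sup_n\|f_n(w)\|\le\|f(w)\|$ a.e., establishing $L=\|f\|=\int_X\|f(w)\|\,d\mu$ becomes exactly equivalent to establishing the pointwise equality $\sup_n\|f_n(w)\|=\|f(w)\|$ for a.e.\ $w$.

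The heart of the matter, and the step I expect to be the main obstacle, is to pass from the order relation $f_n\uparrow f$ \emph{in} $B(X,E,\mu)$ to the pointwise relation $f_n(w)\uparrow f(w)$ \emph{in} $E$ for a.e.\ $w$: once that is in hand, applying the sequential Fatou property of $E$ at a.e.\ fixed $w$ gives $\|f_n(w)\|\uparrow\|f(w)\|$, whence $\sup_n\|f_n(w)\|=\|f(w)\|$ and the display above closes the argument. The delicate point is that the supremum defining $f=\sup_n f_n$ is computed in $B(X,E,\mu)$, and since $E$ need not be Dedekind complete the pointwise suprema $\sup_n f_n(w)$ need not a priori exist in $E$. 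The route I would pursue is to first fix a full-measure set on which $f_n(w)\le f_{n+1}(w)\le f(w)$ for all $n$ (so $f(w)$ is at least a pointwise upper bound), use that all functions in play are essentially separably valued, and then rule out a strictly smaller upper bound on a positive-measure set: if the pointwise suprema (computed in a Dedekind-complete enlargement of $E$) stayed below $f(w)$ there, one would produce a measurable competitor $g$ with $f_n\le g\le f$ and $g\ne f$, contradicting $f=\sup_n f_n$ in $B(X,E,\mu)$. This coordinatewise reading of sequential order limits is the same principle already invoked for decreasing sequences in the proof of Lemma~\ref{1001}; isolating and justifying it cleanly is the only genuinely technical ingredient, the rest being the monotone convergence theorem together with a pointwise application of the hypothesis on $E$.
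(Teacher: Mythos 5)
Your proposal has the same skeleton as the paper's proof: the converse is dispatched through the embedding of Lemma~\ref{60}, and the direct implication is reduced, via the monotone convergence theorem, to the pointwise statement that $f_n\uparrow f$ in $B(X,E,\mu)$ forces $f_n(w)\uparrow f(w)$ in $E$ for a.e.\ $w$. In the converse you are in fact more careful than the paper, which only cites Lemma~\ref{60}: verifying that $f_x$ is the supremum of $(f_{x_n})$ in all of $B(X,E,\mu)$, and not merely in the sublattice $T(E)$, is genuinely needed before the Fatou property of $B(X,E,\mu)$ can be invoked, and your verification is correct.

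The gap is exactly the step you single out, and the route you sketch for it cannot be completed in general; it is also precisely the step the paper itself asserts in one sentence without justification, so the paper's proof shares this gap. The problem is the measurable competitor: in the bad case where the pointwise suprema stay strictly below $f(w)$ on a set of positive measure, no strongly measurable $g$ with $f_n\le g\le f$, $g\ne f$ need exist, because every pointwise choice of smaller upper bounds can be non-measurable. Concretely, let $X=[0,1]$ with Lebesgue measure and let $E=C[0,1]+c_0([0,1])$, a closed sublattice of $\ell_\infty([0,1])$, where $c_0([0,1])$ denotes the functions $h$ on $[0,1]$ with $\{t:|h(t)|>\varepsilon\}$ finite for every $\varepsilon>0$ (the decomposition is unique and both summands are controlled by the sup norm). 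Put $f_n(w)(t)=\min(n|t-w|,1)$ and $f(w)=\mathbf 1$; each $f_n$ is norm-continuous in $w$, hence Bochner integrable, and $0\le f_n\le f_{n+1}\le f$. For every $w$ the supremum of $(f_n(w))$ in $E$ exists and equals $\mathbf 1-\mathbf 1_{\{w\}}\lneq f(w)$. Nevertheless $f=\sup_n f_n$ in $B(X,E,\mu)$: if $g\in B(X,E,\mu)$ dominates every $f_n$ a.e., essential separability of its range (pushed through the bounded projection onto the $c_0([0,1])$-summand) forces the $c_0$-parts of the values $g(w)$ to be supported in one fixed countable, hence null, set $S$; writing $g(w)=c_w+h_w$ for a.e.\ $w\notin S$, density of $[0,1]\setminus(S\cup\{w\})$ and continuity of $c_w$ give $c_w\ge\mathbf 1$, while $h_w(w)=0$, whence $g(w)\ge\mathbf 1=f(w)$. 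So $f_n\uparrow f$ holds in $B(X,E,\mu)$ while $f_n(w)\uparrow f(w)$ fails at every single point; the natural competitor $w\mapsto\mathbf 1-\mathbf 1_{\{w\}}$ has pairwise distances $1$ and is not strongly measurable. (This does not refute the theorem itself, since here $\|f_n\|=\|f\|=1$; it refutes the pointwise principle on which both your argument and the paper's rest.) Your plan can be salvaged when $E$ is separable, where the relation $\{(w,u): u\ge f_n(w)\ \forall n,\ u\wedge f(w)\ne f(w)\}$ is product-measurable and a von Neumann--Aumann selection produces the competitor; but for general $E$ the pointwise passage is false, and the direct implication needs an argument that neither you nor the paper supplies.
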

\begin{proof}
Suppose $E$ has sequentially Fatou property and $(f_n)$ is a positive sequence in $B(X,E,\mu)$ with $f_n\uparrow f$. This means that for a.e. $w\in X$, $f_n(w)\uparrow f(w)$. By assumption, $\|f(w)\|=\sup\{\|f_n(w)\|\}$. On the other hand, $\|f_n(w)\|$ is also a bounded increasing sequence in reals such that $\|f_n(w)\|\rightarrow \|f(w)\|$; in another way, $\|f(w)\|=\sup\{\|f_n(w)\|\}=\lim{\|f_n(w)\|}$. Thus, by the usual monotone convergence theorem, $\int_{X}\|f_n(w)\|d\mu\rightarrow\int_{X}\|f(w)\|d\mu$. This means that
\[\int_{X}\|f(w)\|d\mu=\int_{X}\sup\{\|f_n(w)\|\}d\mu=\sup\{\int_{X}\|f_n(w)\|d\mu\}.\]
Thus, $\|f\|=\sup\{\|f_n\|\}$, as desired.

For the converse, one can utilize Lemma \ref{60}.
\end{proof}
{\bf Question}. We do not know whether Theorem \ref{50} is valid if the norm is Fatou. Recall that the norm of a Banach lattice $E$ is called Fatou if for every positive net $(x_{\alpha})$ with $x_{\alpha} \uparrow x$, we have $\|x_{\alpha}\| \uparrow \|x\|$. Of course, when $B(X,E,\mu)$ is Fatou, the above argument may apply to show that so is $E$ but for the other side, the above trick would not work. Nevertheless, we have seen in Lemma \ref{9} that under a bit stronger condition "order continuity" both directions hold.
\begin{thm}
Suppose $E$ is a Banach lattice. If the dual of $B(X,E,\mu)$ is order continuous, then so is the dual of $E$.
\end{thm}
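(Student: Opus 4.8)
The plan is to reduce the statement to the classical characterization of order continuity of the dual in terms of the absence of an $\ell_1$-sublattice, and then to transport the relevant sublattice through the embedding of Lemma \ref{60}. I would take as the main external input the following fact from the theory of Banach lattices (see \cite{AB}): for a Banach lattice $F$, the dual $F^*$ is order continuous if and only if $F$ contains no closed sublattice lattice isomorphic to $\ell_1$. Everything else is then an elementary transfer argument, entirely in the spirit of the other ``converse'' directions in this note.

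Assuming $B(X,E,\mu)^*$ is order continuous, the cited characterization gives that $B(X,E,\mu)$ contains no closed sublattice lattice isomorphic to $\ell_1$. The next step is to argue by contraposition that the same holds for $E$: if $E$ contained a closed sublattice $F$ lattice isomorphic to $\ell_1$, then applying the topological lattice isomorphism $T$ of Lemma \ref{60}, the image $T(F)$ would be a closed sublattice of $B(X,E,\mu)$ lattice isomorphic to $\ell_1$. Here I would check the routine points: $T$ is a lattice homomorphism, so $T(F)$ is a sublattice; $T$ is bounded below, so it carries the complete space $F$ onto a complete, hence closed, subspace; and $T|_F$ is a lattice isomorphism onto its range, so $T(F)\cong F\cong\ell_1$. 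This contradicts the previous step, so $E$ contains no closed sublattice lattice isomorphic to $\ell_1$, and applying the characterization once more, now to $E$, yields that $E^*$ is order continuous.

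The step I expect to be delicate is not the transfer, which is immediate from Lemma \ref{60}, but the correct formulation and citation of the $\ell_1$-sublattice characterization, since one must use the \emph{lattice} (not merely the linear) notion of containing $\ell_1$ and must be sure the result applies to the norm dual of an arbitrary Banach lattice. An alternative, more self-contained route would avoid the characterization entirely: one would realize $E^*$ as a closed sublattice of $B(X,E,\mu)^*$, for instance via the adjoint $S=J^*$ of the averaging map $J(f)=\int_X f\,d\mu$, which is readily checked to be an isometry of $E^*$ into $B(X,E,\mu)^*$, and then invoke the fact that order continuity of the norm passes to closed sublattices (this follows from the description of order continuity through order bounded disjoint sequences, both notions being inherited by sublattices). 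The obstacle in this second route is precisely showing that $S$ is a lattice homomorphism, i.e. that $|S\phi|=S|\phi|$, whose verification requires either a measurable selection argument or the representation of $B(X,E,\mu)^*$ as the space of weak-$*$ measurable essentially bounded $E^*$-valued functions with pointwise lattice operations.

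Finally, I would point out that only one implication can be expected: taking $E=\rr$ gives $B(X,E,\mu)=L^1(\mu)$, whose dual $L^\infty(\mu)$ is not order continuous while $E^*=\rr$ is, so the converse direction genuinely fails. This also serves as a consistency check that the $\ell_1$-sublattice mechanism is the right one, since $L^1(\mu)$ does contain a closed sublattice lattice isomorphic to $\ell_1$ whereas $\rr$ does not, and it explains why this theorem, unlike the earlier ones, is stated in a single direction.
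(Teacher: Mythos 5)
Your proposal is correct and follows essentially the same route as the paper: both argue by contraposition using the characterization that a Banach lattice has order continuous dual norm if and only if it contains no lattice copy of $\ell_1$, and both transfer a hypothetical $\ell_1$-sublattice of $E$ into $B(X,E,\mu)$ via the embedding of Lemma \ref{60} to contradict the hypothesis. Your closing observation (with $E=\mathbb{R}$ and $B(X,E,\mu)=L^1[0,1]$) is exactly the counterexample the paper records after the theorem to show the converse fails.
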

\begin{proof}
Suppose the dual of $B(X,E,\mu)$ is order continuous but $E'$ is not. So, $E$ contains a lattice copy of $\ell_1$. By Lemma \ref{60}, we conclude that $B(X,E,\mu)$ also contains a lattice copy of $\ell_1$ which contradicts our hypothesis.
\end{proof}
Note that converse of the preceding theorem is not true, in general. Put $X=[0,1]$ with the Lebesgue measure and $E=\Bbb R$. Indeed, $E'$ is order continuous but certainly not the dual of $L^1[0,1]$.

\end{document}